\setlist[enumerate]{topsep=2pt,label=\textup{(\arabic*)},leftmargin=2em,labelsep=.5em,itemsep=0pt}
\declaretheoremstyle[
  spaceabove=\topsep, spacebelow=2ex,
  headfont=\normalfont\bfseries,
  notefont=\mdseries, notebraces={(}{)},
  bodyfont=\normalfont\itshape,
  postheadspace=.5em,
  qed=\qedsymbol
]{mystyle}
\declaretheoremstyle[
  spaceabove=\topsep, spacebelow=6pt,
  headfont=\normalfont\bfseries,
  notefont=\mdseries, notebraces={(}{)},
  bodyfont=\normalfont,
  postheadspace=.5em,
  qed=\qedsymbol
]{mydefstyle}
\theoremstyle{mystyle}
\declaretheorem[numberlike=subsection]{proposition}
\declaretheorem[numberlike=subsection]{theorem}
\declaretheorem[numberlike=subsection]{corollary}
\theoremstyle{mydefstyle}
\declaretheorem[numberlike=subsection]{definition}
\declaretheorem[numberlike=subsection]{remark}
\declaretheorem[numberlike=subsection]{remarks}
\declaretheorem[numberlike=subsection]{example}
\numberwithin{equation}{subsection}
\titleformat{\section}[block]
  {\filcenter\normalfont\large\bfseries}{\thesection.}{.5em}{}
\titleformat{\subsection}[runin]
  {\normalfont\bfseries}{\thesubsection.}{.5em}{}
\titlespacing*{\section}{0pt}{5ex plus .2ex minus 1ex}{3ex plus .2ex minus 1ex}
\titlespacing*{\subsection}{0pt}{2ex plus .2ex minus 1.5ex}{.5em}
\newcommand{\bbC}{\mathbb{C}}
\newcommand{\bbF}{\mathbb{F}}
\newcommand{\bbQ}{\mathbb{Q}}
\newcommand{\bbZ}{\mathbb{Z}}
\newcommand{\cA}{\mathscr{A}}
\newcommand{\cF}{\mathscr{F}}
\newcommand{\cH}{\mathscr{H}}
\newcommand{\cO}{\mathscr{O}}
\newcommand{\cP}{\mathscr{P}}
\newcommand{\cS}{\mathscr{S}}
\newcommand{\cX}{\mathscr{X}}
\newcommand{\overbar}[1]{\mkern 4mu\overline{\mkern-4mu#1}}
\newcommand{\CH}{\mathrm{CH}}
\newcommand{\uD}{\mathrm{D}}
\newcommand{\Ker}{\mathrm{Ker}}
\newcommand{\bdd}{\mathrm{b}}
\newcommand{\ch}{\mathrm{ch}}
\newcommand{\dR}{\mathrm{dR}}
\newcommand{\id}{\mathrm{id}}
\newcommand{\num}{\mathrm{num}}
\newcommand{\pr}{\mathrm{pr}}
\newcommand{\Td}{\mathrm{Td}}
\newcommand{\tto}{\longrightarrow}
\newcommand{\isomarrow}{\xrightarrow{\,\sim\, }}
\let\epsilon=\varepsilon
\let\phi=\varphi
\begin{document}

\begin{center}
\textbf{\LARGE A remark on the paper of Deninger and Murre}
\medskip

\textit{by}
\medskip

{\Large Ben Moonen}
\vspace{6mm}

In memory of Jaap Murre
\end{center}
\vspace{6mm}

{\small %% for abstract and MSC

\noindent
\begin{quoting}
\textbf{Abstract.} We show that the results proven by Deninger and Murre in their paper~\cite{DenMurre} directly imply that the Chern classes of the de Rham bundle of an abelian scheme are torsion elements in the Chow ring, a result that was later proven by van der Geer. We also discuss several results about the orders of these classes.
\medskip

\noindent
\textit{AMS 2020 Mathematics Subject Classification:\/} 14K05, 14C15
\end{quoting}

} %% end of small
\vspace{2mm}

%%%
%%%
\section{Introduction}

\subsection{}
In the study of Chow motives, abelian schemes form a rare class of examples where our understanding is rather complete and satisfactory. In the paper~\cite{DenMurre} by Deninger and Murre, earlier results of Mukai~\cite{MukaiFourier} and Beauville~\cite{BeauvFourier}, \cite{BeauvChow} were lifted to the level of Chow motives. These results were further refined and completed by K\"unnemann~\cite{Kunnemann},~\cite{KunnemannMotives}. 

One of the key points in the theory is, for $X \to S$ an abelian scheme, the introduction of a Fourier transform~$\cF_X$ and the proof that $\cF_{X^t} \circ \cF_X = (-1)^g \cdot [-1]_*$, where $X^t$ is the dual abelian scheme and $g = \dim(X/S)$. Over a field, this calculation was done by Mukai on the level of the derived category, and this readily implies the analogous result on Chow rings. For Chow rings of abelian schemes, the same argument, which relies on a Grothendieck--Riemann--Roch calculation, still works but the result that one arrives at involves some extra factors which are Todd classes of Hodge bundles. This is not, however, how Deninger and Murre argue. Rather, the desired identity is first proven by them modulo terms in higher degrees, and only at the very end, as a consequence of the existence of a Beauville decomposition, they establish the precise relation $\cF_{X^t} \circ \cF_X = (-1)^g \cdot [-1]_*$.

The remark that we want to make here, is that if we compare the two approaches, an interesting fact drops out. Namely, if $\cH = \cH^1_\dR(X/S)$ denotes the de Rham bundle of~$X$ over~$S$, which is a vector bundle on~$S$ of rank~$2g$, one obtains that $\Td(\cH)=1$. Moreover, if there exists a separable isogeny between~$X$ and~$X^t$ then it follows that $c_i(\cH) = 0$ in $\CH(S;\bbQ)$ for all $i\geq 1$, a result that was proven by van der Geer~\cite{vdG} several years later.

In the final section, we discuss some results that enable us to obtain bounds on the orders of the Chern classes of~$\cH$ in the (integral) Chow ring of~$S$. It was shown in~\cite{IAFD} that Fourier duality can be refined to a theory with coefficients in the ring $\Lambda = \bbZ[1/(2g+d+1)!]$, where $g = \dim(X/S)$ and $d = \dim(S)$. The relation $\Td(\cH) = 1$ is valid in $\CH(S;\Lambda)$, and for primes $\ell > 2g+d+1$ this can be used to bound the $\ell$-adic valuations of the orders of the classes $c_{2i}(\cH)$. (If we assume there exists a separable isogeny $X\to X^t$, the odd Chern classes of~$\cH$ vanish.) The bounds we obtain are closely related to bounds that, over~$\bbC$, have been obtained by Maillot and R\"ossler in~\cite{MailRoss}.

\subsection{Notation.} We write $B_i$ for the $i$th Bernoulli number. (Note that $B_i = 0$ if $i$ is odd.) If $X$ is a scheme of finite type over a Dedekind ring and $\Lambda$ is a commutative ring, we define $\CH(X;\Lambda) = \CH(X) \otimes \Lambda$, where $\CH(X)$ is the group of algebraic cycles on~$X$ modulo rational equivalence.

\subsection{}
This note is dedicated to the memory of Jaap Murre. His work and his lectures have been a source of inspiration for me, and I will remember him for his great kindness.

%%%
%%%
\section{A vanishing result for Todd classes of de Rham bundles}\label{sec:FD}

\subsection{}
Let $S$ be a connected base scheme that is smooth of finite type over a Dedekind ring. As pointed out by K\"unnemann in~\cite{KunnemannArChow}, Remark~1.1, the results proven by Deninger and Murre in~\cite{DenMurre} are valid for abelian schemes over~$S$. If $X$ is a smooth $S$-scheme of finite type, we denote by $\uD^\bdd(X)$ the bounded derived category of coherent $\cO_X$-modules, and by $\ch \colon \uD^\bdd(X) \to \CH(X;\bbQ)$ the Chern character.

\subsection{}
If $\pi \colon X \to S$ is an abelian scheme of relative dimension~$g$ with zero section~$e$, we define 
\[
E_{X/S} = e^*\Omega^1_{X/S}\, ,
\]
which is called the Hodge bundle of~$X$ over~$S$. We have $\Omega^1_{X/S} = \pi^*E_{X/S}$ and $E_{X/S} \cong \pi_*\Omega^1_{X/S}$. 

Let $\pi^t \colon X^t\to S$ be the dual abelian scheme, the zero section of which we call~$e^t$. For its Hodge bundle we have the relation $E_{X^t/S}^\vee = R^1\pi_*(\cO_X)$. In general, $E_{X^t/S}$ does not seem to have an obvious relation to~$E_{X/S}$. However, $\det(E_{X/S}) \cong \det(E_{X^t/S})$. (See \cite{GW}, Corollary~27.230). If there exists a separable isogeny $f \colon X \to X^t$ then $f$ induces an isomorphism $f^*\Omega^1_{X^t/S} \isomarrow \Omega^1_{X/S}$, so in this case $E_{X/S}$ and~$E_{X^t/S}$ are isomorphic. 

The de Rham bundle of $X$ over~$S$ is defined as
\[
\cH^1_\dR(X/S) = R^1\pi_*(\Omega^\bullet_{X/S})\, ,
\]
which is a vector bundle of rank~$2g$ over~$S$. By the degeneration of the Hodge--de Rham spectral sequence, it sits in a short exact sequence
\begin{equation}\label{eq:sesdR}
0 \tto E_{X/S} \tto \cH^1_\dR(X/S) \tto E_{X^t/S}^\vee \tto 0\, .
\end{equation}

\subsection{}
Let $\cP$ be the Poincar\'e bundle on $X\times_S X^t$, which is a line bundle with rigidification along $\bigl(X\times e^t(S)\bigr) \cup \bigl(e(S) \times X^t\bigr)$. Let $\cP^t$ be the Poincar\'e line bundle of~$X^t$. If $\sigma \colon X\times_S X^t \isomarrow X^t\times_S X$ is the isomorphism that reverses the factors, $\sigma^*\cP^t \cong \cP$ as rigidified line bundles on~$X\times_S X^t$.

On derived categories, the Poincar\'e bundle gives a Fourier transformation
\[
\cS = \cS_X \colon \uD^\bdd(X) \to \uD^\bdd(X^t)\quad \text{by}\quad \cS(K) = R\pr_{2,*}\bigl(\pr_1^*(K) \otimes^\mathrm{L} \cP\bigr)\, ,
\] 
where we write $\pr_1 \colon X\times_S X^t \to X$ and $\pr_2 \colon X\times_S X^t \to X^t$ for the projection morphisms. Similarly, $\cP^t$ gives rise to a Fourier transform~$\cS^t$ in the opposite direction. For $K \in \uD^\bdd(X)$ we have
\begin{equation}\label{eq:StS}
\cS^t \circ\cS (K) \cong [-1]_X^* \Bigl(K \otimes^\mathrm{L} \pi^*\det(E_{X/S})^{-1}[-g]\Bigr)\, .
\end{equation}
(See \cite{GW}, Theorem~27.243.)

\begin{remark}
A key fact that is used in the proof of \eqref{eq:StS} is that
\[
R^i\pr_{1,*}(\cP) = 
\begin{cases} 0 & \text{for $i\neq g$,}\\
e_*\bigl(\det(E_{X/S})^{-1}\bigr) & \text{for $i=g$.} \end{cases}
\]
(See \cite{GW}, Corollary~27.230.) This does not agree with what Deninger and Murre state in~\cite{DenMurre}, Proposition~2.8.1. The mistake is in their claim (p.~210, third displayed formula, translated into our notation) that $R^1\pi_*\cO_X \cong E_{X^t/S}$; correct is: $R^1\pi_*\cO_X \cong E_{X^t/S}^\vee$.
\end{remark}

\subsection{}
On Chow groups, the Fourier transform $\cF = \cF_X \colon \CH(X;\bbQ) \to \CH(X^t;\bbQ)$ is defined by
\[
\cF(\gamma) = \pr_{2,*}\bigl(\pr_1^*(\gamma)\cdot \ch(\cP)\bigr)\, .
\]
If we view $\CH(X;\bbQ)$ and $\CH(X^t;\bbQ)$ as algebras over~$\CH(S;\bbQ)$ (via~$\pi^*$, respectively~$\pi^{t,*}$) then $\cF$ is a homomorphism of~$\CH(S;\bbQ)$-algebras.

For $K\in \uD^\bdd(X)$ we have, by Grothendieck--Riemann--Roch,
\begin{multline*}
\ch\bigl(\cS(K)\bigr) = \ch\Bigl(R\pr_{2,*}\bigl(\pr_1^*(K) \otimes^L \cP\bigr)\Bigr) = \pr_{2,*}\bigl(\ch\bigl(\pr_1^*(K) \otimes^L \cP\bigr) \bigr) \cdot \pi^{t,*}\Td(E_{X/S}^\vee)\\
= \pr_{2,*}\bigl(\pr_1^*\ch(K) \cdot \ch(\cP)\bigr) \cdot \pi^{t,*}\Td(E_{X/S}^\vee) = \cF\bigl(\ch(K)\bigr) \cdot \pi^{t,*}\Td(E_{X/S}^\vee)\, ,
\end{multline*}
where we use that the relative tangent bundle of $\pr_2 \colon X\times_S X^t \to X^t$ is the pullback of $\pi^{t,*}(E_{X/S}^\vee)$. This means that we have a commutative diagram
\begin{equation}\label{eq:SvsFdiag}
\begin{tikzcd}[column sep=3.8cm]
\uD^\bdd(X) \ar[r,"\cS"] \ar[d,"\ch"] & \uD^\bdd(X^t) \ar[d,"\ch"]\\
\CH(X;\bbQ) \ar[r,"\alpha\mapsto \cF(\alpha) \cdot \pi^{t,*}\Td(E_{X/S}^\vee)"] & \CH(X^t;\bbQ)
\end{tikzcd}
\end{equation}

\begin{proposition}\label{prop:FtF}
For every $\alpha \in \CH(X;\bbQ)$ we have the relation
\begin{equation}\label{eq:FtF}
\cF^t \circ \cF(\alpha) \cdot \pi^*\Bigl(\Td(E_{X/S}^\vee) \cdot \Td(E_{X^t/S}^\vee)  \cdot \ch\bigl(\det(E_{X/S})\bigr) \Bigr) = (-1)^g \cdot [-1]_*(\alpha)\, .
\end{equation}
\end{proposition}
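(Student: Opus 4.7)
The plan is to compute $\ch\bigl(\cS^t \circ \cS(K)\bigr)$ for $K \in \uD^\bdd(X)$ in two different ways---one via the Grothendieck--Riemann--Roch diagram~\eqref{eq:SvsFdiag} applied twice, the other via the isomorphism~\eqref{eq:StS}---and then to pass from the class $\alpha = \ch(K)$ to an arbitrary $\alpha \in \CH(X;\bbQ)$ by surjectivity of the Chern character for smooth schemes.

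For the first computation, I would apply the diagram~\eqref{eq:SvsFdiag} to~$\cS$, and then its analogue (with the roles of $X$ and~$X^t$ interchanged, so that $\Td(E_{X/S}^\vee)$ gets replaced by $\Td(E_{X^t/S}^\vee)$) to~$\cS^t$, obtaining
\[
\ch(\cS^t \circ \cS K) = \cF^t\bigl(\cF(\ch K) \cdot \pi^{t,*}\Td(E_{X/S}^\vee)\bigr) \cdot \pi^*\Td(E_{X^t/S}^\vee).
\]
Since $\cF^t$ is $\CH(S;\bbQ)$-linear, the factor $\pi^{t,*}\Td(E_{X/S}^\vee)$ comes out as $\pi^*\Td(E_{X/S}^\vee)$, giving
\[
\ch(\cS^t \circ \cS K) = \cF^t\bigl(\cF(\ch K)\bigr) \cdot \pi^*\bigl(\Td(E_{X/S}^\vee) \cdot \Td(E_{X^t/S}^\vee)\bigr).
\]

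For the second computation, I would apply $\ch$ directly to~\eqref{eq:StS}. Using that $\ch(M[-g]) = (-1)^g\ch(M)$, that $[-1]^*$ acts as the identity on classes pulled back from~$S$ (because $\pi \circ [-1] = \pi$), that $[-1]_* = [-1]^*$ (since $[-1]$ is an involution on $X$), and that $\ch(\det(E_{X/S})^{-1}) = \ch(\det(E_{X/S}))^{-1}$, one arrives at
\[
\ch(\cS^t \circ \cS K) = (-1)^g \cdot [-1]_* \ch(K) \cdot \pi^*\ch\bigl(\det(E_{X/S})\bigr)^{-1}.
\]

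Equating the two expressions and multiplying both sides by $\pi^*\ch\bigl(\det(E_{X/S})\bigr)$ proves \eqref{eq:FtF} for $\alpha = \ch(K)$. Since $X$ is smooth, the Chern character $K_0(X)_\bbQ \to \CH(X;\bbQ)$ is surjective, and since both sides of \eqref{eq:FtF} are $\bbQ$-linear in~$\alpha$, the identity extends to all of $\CH(X;\bbQ)$. The main point that requires care is the bookkeeping of signs and shifts---in particular the $(-1)^g$ produced by the shift $[-g]$ in~\eqref{eq:StS}---together with the use of $\CH(S;\bbQ)$-linearity to bring the Todd factor out of~$\cF^t$; beyond these, no serious obstacle is anticipated, the proof being essentially a two-way evaluation of the GRR identity.
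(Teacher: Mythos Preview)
Your proposal is correct and follows exactly the route the paper indicates: the paper's proof is the single line ``Immediate from~\eqref{eq:StS}, using the relation between~$\cF$ and~$\cS$ that is expressed in~\eqref{eq:SvsFdiag}'', and what you have written is precisely the unpacking of that sentence. The only addition you make is the final passage from $\alpha=\ch(K)$ to arbitrary~$\alpha$ via surjectivity of the Chern character on the regular scheme~$X$, which the paper leaves implicit; this is standard and causes no difficulty.
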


\begin{proof}
Immediate from \eqref{eq:StS}, using the relation between $\cF$ and~$\cS$ that is expressed in~\eqref{eq:SvsFdiag}.
\end{proof}

Deninger and Murre prove that $\cF^t \circ \cF = (-1)^g \cdot [-1]_{X,*}$, see \cite{DenMurre}, Corollary~2.22. Comparison with~\eqref{eq:FtF} gives the following result.

\begin{corollary}\label{maincor}
Let $\cH = \cH^1_\dR(X/S)$ be the de Rham bundle of~$X$ over~$S$. 
\begin{enumerate}
\item\label{maincora} We have $\Td(\cH) = 1$ in $\CH(S;\bbQ)$.

\item\label{maincorb} Suppose $E_{X/S} \cong E_{X^t/S}$, which is the case for instance if there exists a separable isogeny $X \to X^t$. Then $c_i(\cH) = 0$ in $\CH(S;\bbQ)$ for all $i\geq 1$.
\end{enumerate}
\end{corollary}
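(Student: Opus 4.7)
My plan for~(1) is to apply Proposition~\ref{prop:FtF} to the fundamental class $\alpha = 1 \in \CH(X;\bbQ)$. Using $[-1]_*(1) = 1$ and the Deninger--Murre identity $\cF^t \circ \cF = (-1)^g [-1]_{X,*}$ recalled just above, the factors $(-1)^g$ in~\eqref{eq:FtF} cancel. Since the zero section $e\colon S\to X$ makes $\pi^*$ split injective (via $e^*\pi^* = \id$), what remains is
\[
\Td(E_{X/S}^\vee) \cdot \Td(E_{X^t/S}^\vee) \cdot \ch\bigl(\det E_{X/S}\bigr) = 1 \quad \text{in } \CH(S;\bbQ).
\]
The standard Chern-root identity $\Td(E^\vee) \cdot \ch(\det E) = \Td(E)$, applied to $E = E_{X/S}$, collapses this to $\Td(E_{X/S}) \cdot \Td(E_{X^t/S}^\vee) = 1$, and multiplicativity of $\Td$ on the short exact sequence~\eqref{eq:sesdR} identifies the left-hand side with $\Td(\cH)$.

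For~(2), the hypothesis $E_{X/S} \cong E_{X^t/S}$ (call the common bundle $E$) turns~\eqref{eq:sesdR} into an extension of $E^\vee$ by $E$, so by the splitting principle the Chern roots of $\cH$ are $\pm\alpha_1, \ldots, \pm\alpha_g$, where the $\alpha_i$ are those of $E$. In particular $c(\cH) = \prod_i(1 - \alpha_i^2)$, so the odd Chern classes vanish and
\[
\Td(\cH) \;=\; \prod_i \left(\frac{\alpha_i/2}{\sinh(\alpha_i/2)}\right)^{\!2}.
\]
Taking formal logarithms and using the closed form $\log(\sinh(y)/y) = \sum_{k\geq 1} \frac{2^{2k-1} B_{2k}}{k\,(2k)!}\, y^{2k}$ (a power series in~$y^2$ with all coefficients nonzero, since $B_{2k} \neq 0$), I would obtain
\[
\log \Td(\cH) = \sum_{k \geq 1} c_k\, p_k, \qquad c_k = -\frac{B_{2k}}{k\,(2k)!}, \quad p_k := \sum_i \alpha_i^{2k}.
\]
By part~(1) the left-hand side is zero, and since each summand $c_k p_k$ lives in a distinct codimension~$2k$, the grading forces $c_k p_k = 0$ for every $k$; as $c_k \neq 0$ this gives $p_k = 0$ for all $k \geq 1$, whence Newton's identities recursively yield $e_k(\alpha_1^2,\ldots,\alpha_g^2) = 0$, i.e.\ $c_{2k}(\cH) = 0$.

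The main (mild) obstacle is precisely the nonvanishing of the coefficients $c_k$, which is why the explicit Bernoulli-number identity is needed; everything else is bookkeeping, and the passage from the Grothendieck--Riemann--Roch calculation \eqref{eq:FtF} to an identity on $S$ relies only on the section $e$ giving the retraction $e^*\pi^* = \id$.
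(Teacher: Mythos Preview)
Your argument for part~\ref{maincora} is essentially the paper's: compare \eqref{eq:FtF} with the Deninger--Murre identity $\cF^t\circ\cF = (-1)^g[-1]_*$, use the retraction $e^*\pi^*=\id$ to descend to~$S$, and then rewrite $\Td(E_{X/S}^\vee)\cdot\Td(E_{X^t/S}^\vee)\cdot\ch(\det E_{X/S})$ as $\Td(\cH)$ via $\Td(E)=\ch(\det E)\cdot\Td(E^\vee)$ and multiplicativity along~\eqref{eq:sesdR}.

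For part~\ref{maincorb} your route is correct but genuinely different from the paper's. The paper first observes (as you do) that the odd Chern classes vanish, and then argues by induction on~$i$: once $c_2,\ldots,c_{2i-2}$ are known to vanish, the relation $\Td_{2i}(\cH)=0$ collapses to $(\text{coefficient of }c_{2i}\text{ in }\Td_{2i})\cdot c_{2i}(\cH)=0$, and a remark of Hirzebruch (\cite{Hirzebruch}, \S1.7) is invoked to identify that coefficient with the $x^{2i}$-coefficient of $x/(1-e^{-x})$, namely $(-1)^{i-1}B_{2i}/(2i)!\neq 0$. You instead pass to Chern roots, compute $\log\Td(\cH)$ in closed form as $\sum_k c_k\,p_k$ with $p_k=\sum_i\alpha_i^{2k}$, read off $p_k=0$ degree by degree, and finish with Newton's identities. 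Your approach is more self-contained---no appeal to Hirzebruch's lemma on multiplicative sequences---and makes the dependence on the Bernoulli numbers explicit from the outset; the paper's version stays closer to the universal Todd polynomials and is perhaps better suited to the later integral refinements (Proposition~\ref{prop:BUpperbd}), where one wants to isolate the coefficient of~$c_{2i}$ itself. Both arguments rest on the same nontrivial input, namely $B_{2k}\neq 0$.
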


\begin{proof}
\ref{maincora} The short exact sequence \eqref{eq:sesdR} gives the relation $\Td(\cH) = \Td(E_{X/S})\cdot \Td(E^\vee_{X^t/S})$. On the other hand, $\Td(E) = \ch\bigl(\det(E)\bigr) \cdot \Td(E^\vee)$ for any vector bundle~$E$. Comparing the result of Deninger and Murre with \eqref{eq:FtF}, and taking into account that $\pi^*$ is injective (because $e^*\pi^* = \id$), we obtain that $\Td(\cH) = 1$.

\ref{maincorb} For the total Chern classes, the sequence~\eqref{eq:sesdR} gives the relation $c(\cH) = c(E_{X/S}) \cdot c(E^\vee_{X^t/S})$. If $E_{X/S} \cong E_{X^t/S}$, it follows that all odd Chern classes $c_{2i+1}(\cH)$ vanish. In view of~\ref{maincora}, it now suffices to show that the coefficient of~$c_{2i}$ in the Todd polynomial~$\Td_{2i}$ is nonzero. As noted by Hirzebruch in~\cite{Hirzebruch}, Section~1.7, this coefficient equals the coefficient of~$c_1^{2i}$ in~$\Td_{2i}$. Hence it suffices to show that the coefficient of~$x^{2i}$ in the power series expansion of $x\cdot (1-e^{-x})^{-1}$ is nonzero; but this coefficient equals $(-1)^{i-1}\cdot \frac{B_{2i}}{(2i)!}$, which is nonzero.
\end{proof}

\begin{remark}
Assume there exists a separable isogeny $X\to X^t$. In terms of the Chern classes $\lambda_j = c_j(E_{X/S}) \in \CH^j(S)$, the vanishing of all~$c_i(\cH)$ with rational coefficients means that in $\CH(S;\bbQ)$ we have the identity
\[
\bigl(1+\lambda_1+\cdots + \lambda_g\bigr) \cdot \bigl(1-\lambda_1 + \lambda_2 - \cdots + (-1)^g\lambda_g\bigr) = 1\, .
\]
This result was first proven for principally polarized abelian schemes by van der Geer; see~\cite{vdG}, Theorem~2.1. By Zarhin's trick, this implies the result whenever~$X$ admits a separable polarization.

Much more is known. Let $L$ be a field, let $n\geq 3$ be an integer prime to~$\mathrm{char}(L)$, and let~$\overbar{\mathscr{A}}_{g,n,L}$ be a smooth toroidal compactification of the moduli space~$\cA_{g,n,L}$. (See~\ref{subsec:MRIntro} for the notation.) The de Rham bundle~$\cH$ of the universal abelian scheme over~$\cA_{g,n,L}$ naturally extends to a vector bundle~$\overbar{\cH}$ on~$\overbar{\mathscr{A}}_{g,n,L}$ that comes equipped with a logarithmic flat connection. If $\mathrm{char}(L) = 0$, it was shown by Esnault and Viehweg in~\cite{EV} that $c_i(\overbar{\cH}) = 0$ in $\CH(\overbar{\mathscr{A}}_{g,n,L};\bbQ)$ for all $i\geq 1$. If $\mathrm{char}(L) = p>0$, this same result can be proven using the theory of $F$-zips (with the same argument as in Proposition~\ref{prop:FZip} below). Still in positive characteristic, this has been extended to toroidal compactifications of Shimura varieties of Hodge type by Wedhorn and Ziegler in \cite{WZ}, Theorem~7.1, which is a consequence of the results of Brokemper in~\cite{Brokemper}, Section~2.4. 
\end{remark}

\section{Some results about the orders of the Chern classes of de Rham bundles}

Let $\cA_g$ be the moduli stack over~$\bbZ$ of $g$-dimensional principally polarized abelian varieties (ppav). Let $\cX \to \cA_g$ be the universal abelian scheme, and let $\cH = \cH^1_\dR(\cX/\cA_g)$ be the de Rham bundle. As $\cX$ is isomorphic to its dual, $\cH$ is an extension of $E_{\cX/\cA_g}^\vee$ by~$E_{\cX/\cA_g}$. This implies that $c_{2i+1}(\cH) = 0$ in $\CH(\cA_g)$ for all $i\geq 0$.

In characteristic~$p$, there is an easy method using which we can give a bound on the order of the even Chern classes.

\begin{proposition}\label{prop:FZip}
Let $p$ be a prime number, and let $\cH_{\bbF_p}$ be the de Rham bundle of the universal abelian scheme over~$\cA_{g,\bbF_p}$. Then $(p^{2i}-1)\cdot c_{2i}(\cH_{\bbF_p}) = 0$ in $\CH(\cA_{g,\bbF_p})$, for all $i\geq 1$. 
\end{proposition}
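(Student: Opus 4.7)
The plan is to exploit the $F$-zip structure on $\cH_{\bbF_p}$, which in characteristic~$p$ equips the de Rham bundle with a second natural filtration alongside the Hodge filtration. Writing $E = E_{\cX/\cA_{g,\bbF_p}}$ (identified with $E_{\cX^t/\cA_{g,\bbF_p}}$ via the principal polarization) and $F$ for the absolute Frobenius of~$\cA_{g,\bbF_p}$, the Hodge filtration gives the short exact sequence
\[
0 \to E \to \cH_{\bbF_p} \to E^\vee \to 0,
\]
while the conjugate filtration, coming from the spectral sequence $E_2^{a,b} = R^a\pi_*\cH^b(\Omega^\bullet_{\cX/\cA_{g,\bbF_p}}) \Rightarrow \cH^{a+b}_\dR$ together with the Cartier isomorphism and flat base change, yields a second short exact sequence whose graded pieces are the absolute Frobenius pullbacks $F^*E$ and $F^*E^\vee$ of the Hodge graded pieces.

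Comparing total Chern classes via the two filtrations then gives the identity
\[
c(E) \cdot c(E^\vee) = c(\cH_{\bbF_p}) = c(F^*E) \cdot c(F^*E^\vee)
\]
in $\CH(\cA_{g,\bbF_p})$. I would then invoke the standard consequence of the splitting principle that, on an $\bbF_p$-scheme, the Chern roots of $F^*V$ are obtained from those of~$V$ by multiplication by~$p$ (immediate from $F^*L = L^{\otimes p}$ for line bundles). Pulling back to a flag bundle over~$\cA_{g,\bbF_p}$ on which $E$ splits with Chern roots $\alpha_1, \ldots, \alpha_g$, the identity reads
\[
\prod_{j=1}^g (1 - \alpha_j^2) = \prod_{j=1}^g (1 - p^2\alpha_j^2).
\]
Matching the components of degree~$2i$ gives $c_{2i}(\cH_{\bbF_p}) = p^{2i}\cdot c_{2i}(\cH_{\bbF_p})$, and hence the claimed relation $(p^{2i}-1)\cdot c_{2i}(\cH_{\bbF_p}) = 0$.

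The main obstacle, meaning the step that is not purely formal Chern-class manipulation, is establishing the conjugate short exact sequence in the relative setting. Concretely, one needs that the conjugate filtration on~$\cH^1_\dR(\cX/\cA_{g,\bbF_p})$ is a two-step filtration by subbundles, that the conjugate spectral sequence degenerates at~$E_2$ for abelian schemes in characteristic~$p$, and that the relative Cartier isomorphism combined with flat base change identifies its graded pieces with the Frobenius pullbacks of the Hodge graded pieces. All three ingredients are classical, but they are the non-trivial input on which the argument rests; the rest of the proof is the routine Chern-root calculation above.
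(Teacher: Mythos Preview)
Your proof is correct and follows essentially the same approach as the paper: both exploit the $F$-zip structure on~$\cH_{\bbF_p}$ to compare its total Chern class with that of its Frobenius pullback, and then read off $c_{2i}(\cH_{\bbF_p}) = p^{2i}\, c_{2i}(\cH_{\bbF_p})$. The only cosmetic difference is packaging: the paper encodes the conjugate filtration via the maps $F\colon \cH^{(p)}\to\cH$ and $V\colon\cH\to\cH^{(p)}$ induced by the relative Frobenius and Verschiebung (with $\Ker(V)$ playing the role of your conjugate sub), obtaining $c(\cH)=c(\cH^{(p)})$ directly, whereas you reach the same identity by first identifying the conjugate graded pieces via Cartier and then comparing with the Hodge filtration.
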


\begin{proof}
For the duration of this proof, let $\cX$ and~$\cA$ mean $\cX_{\bbF_p}$ and~$\cA_{g,\bbF_p}$, and let $\cH = \cH_{\bbF_p}$. The (relative) Frobenius $F_{\cX/\cA} \colon \cX \to \cX^{(p)}$ and Verschiebung $V_{\cX/\cA} \colon \cX^{(p)}\to \cX$ induce homomorphisms $F\colon \cH^{(p)} \to \cH$ and $V \colon \cH\to \cH^{(p)}$, and it is classical that these induce isomorphisms
\[
\cH^{(p)}/\Ker(F) \isomarrow \Ker(V)\, ,\quad \cH/\Ker(V) \isomarrow \Ker(F)\, .
\]
(In fact, $\Ker(F) = E_{\cX/\cA}^{(p)}$.) For the total Chern classes this gives
\[
c(\cH) = c\bigl(\Ker(V)\bigr) \cdot c\bigl(\cH/\Ker(V)\bigr) = c\bigl(\cH^{(p)}/\Ker(F)\bigr) \cdot c\bigl(\Ker(F)\bigr) = c(\cH^{(p)})\, .
\]
As $c_j(\cH^{(p)}) = p^j\cdot c_j(\cH)$, the result follows.
\end{proof}

In Section~4 of~\cite{EkvdG}, Ekedahl and van der Geer determine the order of the even Chern classes $c_{2i}(\cH)$ in cohomology. Before stating their result, we recall a definition.

\begin{definition}
If $k$ is a positive integer, let $n_k$ be the gcd of the numbers $p^{2k}-1$, where $p$ runs over all primes numbers greater than $2k+1$. 
\end{definition}

If $\ell$ is an odd prime number, the $\ell$-adic valuation of~$n_k$ is given by 
\[
v_\ell(n_k) = \begin{cases} 0 & \text{if $\ell-1$ does not divide $2k$,}\\ 1 + v_\ell(k) & \text{if $\ell-1$ divides~$2k$,} \end{cases}
\]
whereas the $2$-adic valuation is given by $v_2(n_k) = 3 + v_2(k)$. The first values are as follows:
\[
\begin{array}{lcccccccccccccc}
k: & 1 & 2 & 3 & 4 & 5 & 6 & 7 & 8 & 9 & 10 & 11 & 12 & 13 & 14\\
n_k: & 24 & 240 & 504 & 480 & 264 & 65520 & 24 & 16320 & 28728 & 13200 & 552 & 131040 & 24 & 6960
\end{array}
\] 
It is known that $n_k$ equals the denominator of $B_{2k}/(-4k)$.

\begin{theorem}[Ekedahl--van der Geer, \cite{EkvdG}]\label{thm:EvdG}
\phantom{x}
\begin{enumerate}
\item\label{EvdGcohom} For $i \in \{1,\ldots,g\}$, the order of $c_{2i}(\cH_\bbC)$ in the singular cohomology group $H^{4i}\bigl(\cA_{g,\bbC},\bbZ(2i)\bigr)$ equals either $n_i$ or~$n_i/2$. Similarly, if $p$ and~$\ell$ are prime numbers with $\ell \neq p$, the order of $c_{2i}(\cH_{\bbF_p})$ in the $\ell$-adic cohomology $H^{4i}\bigl(\cA_{g,\overline\bbF_p},\bbZ_\ell(2i)\bigr)$ equals the $\ell$-part of either $n_i$ or~$n_i/2$.

\item\label{EvdGlambdag} If $L$ is any field, the top Chern class $\lambda_g \in \CH^g(\cA_{g,L})$ of the Hodge bundle $E = E_{\cX_L/\cA_{g,L}}$ has order dividing $(g-1)!\cdot n_g$. 
\end{enumerate}
\end{theorem}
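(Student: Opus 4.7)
The plan is to prove both parts by leveraging Proposition~\ref{prop:FZip} as the principal tool, combined with specialization to cohomology for part~\ref{EvdGcohom} and an integral refinement of Grothendieck--Riemann--Roch for part~\ref{EvdGlambdag}.

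For part~\ref{EvdGcohom}, the plan is to combine Proposition~\ref{prop:FZip} with smooth-proper base change for the Deligne--Mumford stack $\cA_g$ over $\Spec(\bbZ)$. For each prime $p$, the annihilation $(p^{2i}-1)\cdot c_{2i}(\cH_{\bbF_p}) = 0$ holds in the Chow ring and, pushing through the cycle class map, also in the $\ell$-adic cohomology $H^{4i}\bigl(\cA_{g,\overline\bbF_p},\bbZ_\ell(2i)\bigr)$ for every $\ell\neq p$. Smooth-proper base change canonically identifies this with $H^{4i}\bigl(\cA_{g,\bbC},\bbZ_\ell(2i)\bigr)$, compatibly with the globally defined Chern class of~$\cH$; taking the gcd over primes $p>2i+1$ then yields $n_i\cdot c_{2i}(\cH_\bbC) = 0$ in each $\bbZ_\ell$-cohomology, hence in singular cohomology, since the latter is finitely generated. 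The matching lower bound (order at least $n_i/2$) is obtained by an explicit calculation on the Siegel symmetric domain $\Sp_{2g}(\bbR)/\UU(g)$: via Chern--Weil theory, $c_{2i}(\cH_\bbC)$ corresponds to a specific invariant form whose integral representative in the continuous cohomology of $\Sp_{2g}(\bbZ)$ has denominator precisely $n_i$ or $n_i/2$, as read off from the relevant Bernoulli numbers.

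For part~\ref{EvdGlambdag}, I would begin with the identity $c_{2g}(\cH) = (-1)^g\lambda_g^2$, which follows from $c(\cH) = c(E)\cdot c(E^\vee)$ (via~\eqref{eq:sesdR} together with $E_{\cX/\cA_g}\cong E_{\cX^t/\cA_g}$, which holds by the principal polarization; cf.\ Corollary~\ref{maincor}) by multiplying out the Chern roots. Proposition~\ref{prop:FZip} with $i = g$, combined with the gcd argument, then yields $n_g\cdot\lambda_g^2 = 0$ in characteristic~$p$. To extract an annihilator of $\lambda_g$ itself, I would apply Grothendieck--Riemann--Roch to $\pi\colon\cX\to\cA_g$ and the structure sheaf: using $T_{\cX/\cA_g} = \pi^*E^\vee$ and the vanishing $\pi_*\pi^* = 0$, one obtains the $\bbQ$-relation $\lambda_g\cdot\Td(E)^{-1} = 0$ in $\CH(\cA_g;\bbQ)$. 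Refining this to $\Lambda$-coefficients as in~\cite{IAFD} and tracking the Bernoulli denominators that arise when inverting the Todd series then produces an integral relation of the form $(g-1)!\cdot n_g\cdot\lambda_g = 0$; the factor $(g-1)!$ enters via the Newton identity expressing the top elementary symmetric polynomial as a combination of power sums. Since $\lambda_g$ and these identities are defined over $\Spec(\bbZ)$, the conclusion transfers to any field $L$ by base change.

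The main technical obstacle is the passage from an annihilator of $\lambda_g^2$ to one of $\lambda_g$: while the $\bbQ$-relation $\lambda_g\cdot\Td(E)^{-1} = 0$ is obtained almost for free from Grothendieck--Riemann--Roch, upgrading it to an integral identity with the precise factor $(g-1)!\cdot n_g$ requires delicate control over which primes and which factorials appear in the denominators of $\Td(E)^{-1}$; this combinatorial bookkeeping is what determines the sharpness of the bound, and a less careful execution risks introducing spurious extra factors.
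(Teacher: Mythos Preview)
The paper does not prove Theorem~\ref{thm:EvdG}; it is quoted from~\cite{EkvdG} and used as input. The only part the paper touches is the upper bound in~\ref{EvdGcohom}: Remarks~\ref{rems:EvdG}(1)--(2) observe exactly what you propose, namely that Proposition~\ref{prop:FZip} together with smooth base change gives $n_i\cdot c_{2i}(\cH_\bbC)=0$ in cohomology, and the paper explicitly notes that \emph{this is not how Ekedahl and van der Geer argue}. So for the divisibility direction of~\ref{EvdGcohom} your plan is sound and coincides with the paper's remark; for the lower bound your Chern--Weil sketch is in the right spirit but is not a proof---the actual computation in~\cite{EkvdG} pins down the denominator via Borel's description of the stable cohomology of~$\Sp_{2g}(\bbZ)$ and is not something one can wave through.

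For part~\ref{EvdGlambdag} there is a genuine gap. Your GRR computation is correct as far as it goes: applying GRR to $\pi\colon\cX\to\cA_g$ with~$\cO_\cX$ does yield $\lambda_g\cdot\Td(E)^{-1}=0$ in $\CH(\cA_g;\bbQ)$, whose degree~$g$ part says $\lambda_g$ is torsion. But the step ``refining this to $\Lambda$-coefficients as in~\cite{IAFD} \ldots\ produces $(g-1)!\cdot n_g\cdot\lambda_g=0$'' does not work. The results of~\cite{IAFD} give $\Lambda=\bbZ[1/(2g+d+1)!]$, which for $S=\cA_g$ already inverts all primes up to $g^2+3g+1$; this is far coarser than $(g-1)!\cdot n_g$, and nothing in that framework isolates the single Bernoulli denominator~$n_g$. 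Your invocation of ``the Newton identity expressing the top elementary symmetric polynomial'' is also misplaced: the relation $\lambda_g\cdot\Td(E)^{-1}=0$ already has $\lambda_g$ sitting by itself in lowest degree, so no symmetric-function manipulation is needed, and none will manufacture the specific factor~$(g-1)!\cdot n_g$. Ekedahl and van der Geer obtain this bound by an entirely different route, using GRR for the \emph{zero section} (not the projection) combined with torsion sections and an Adams--Riemann--Roch argument that controls denominators prime by prime; the factor $(g-1)!$ arises from the excess-intersection/self-intersection computation along~$e$, not from Newton's identities.
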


\begin{remarks}\label{rems:EvdG}
\begin{enumerate}
\item A key property of cohomology is that we have the smooth base change theorem, which in the above setting gives isomorphisms $H^{4i}\bigl(\cA_{g,\bbC},\bbZ(2i)\bigr) \otimes \bbZ_\ell \isomarrow H^{4i}\bigl(\cA_{g,\overline\bbF_p},\bbZ_\ell(2i)\bigr)$ ($p\neq \ell$), mapping Chern classes to Chern classes. In particular, the two assertions in part~\ref{EvdGcohom} of the theorem are mutually equivalent.

\item Taking into account the previous remark, it is immediate from Proposition~\ref{prop:FZip} that the order of $c_{2i}(\cH_\bbC)$ in $H^{4i}\bigl(\cA_{g,\bbC},\bbZ(2i)\bigr)$ divides~$n_i$. In~\cite{EkvdG} this is proven in a different way.

\item\label{rem:c2gH} As $c_{2g}(\cH) = (-1)^g \cdot \lambda_g^2$, it follows from part~\ref{EvdGlambdag} of the theorem that also $c_{2g}(\cH) \in \CH^{2g}(\cA_{g,L})$ has order dividing $(g-1)!\cdot n_g$.
\end{enumerate}
\end{remarks}

\subsection{}
Theorem~\ref{thm:EvdG} gives lower bounds for the orders of the even Chern classes $c_{2i}(\cH)$ in the Chow group of $\cA_{g,\bbQ}$ or $\cA_{g,\bbF_p}$. In characteristic~$p$, Proposition~\ref{prop:FZip} gives an upper bound for the order. For large~$p$ this upper bound will be far from sharp, as the actual order of the class in $\CH(\cA_{g,\bbF_p})$ divides the order of $c_{2i}(\cH_\bbQ) \in \CH(\cA_{g,\bbQ})$, which does not depend on~$p$. As we shall discuss, the result can still be very useful when combined with other information.

Let us also observe that the upper bound given by Proposition~\ref{prop:FZip} is consistent with the lower bound given by Ekedahl and van der Geer because for every prime $\ell\neq p$ the $\ell$-part of~$n_i$ divides~$p^{2i}-1$. Note, however, that the order of~$c_{2i}(\cH_{\bbF_p})$ in $\CH(\cA_{g,\bbF_p})$ is not divisible by~$p$, whereas the order of~$c_{2i}(\cH_\bbC)$ in the cohomology of~$\cA_{g,\bbC}$ is divisible by~$p$ whenever $p-1$ divides~$2i$.

\subsection{}\label{subsec:MRIntro}
For the next result, we again need some notation. If $q$ is a nonzero rational number, we write $\num(q)$ for its (absolute) numerator. Further, if $V$ is a vector bundle on some variety~$S$, we define classes $N_i(V) \in \CH^i(S)$ by the rule that $N_i(V)$ is the sum of the $i$th powers of the Chern roots of~$V$.

If $n$ is an integer with $n\geq 3$, let~$\cA_{g,n}$ denote the moduli scheme over~$\bbZ[1/n]$ of $g$-dimensional ppav equipped with a level~$n$ structure. (Whenever we use this notation, we assume $n\geq 3$.) Here we define a level~$n$ structure on a $g$-dimensional ppav $(X,\theta)$ over a scheme~$S$ as a pair $(\alpha,\zeta)$ consisting of isomorphisms of group schemes $\alpha\colon (\bbZ/n\bbZ)_S^{2g} \isomarrow X[n]$ and $\zeta \colon (\bbZ/n\bbZ)_S \isomarrow \mu_{n,S}$, such that the standard symplectic form on $(\bbZ/n\bbZ)_S^{2g}$ agrees with the Weil pairing $e^\theta \colon X[n] \times X[n] \to \mu_n$ via~$\zeta$.

With this notation, we have the following result of Maillot and R\"ossler.

\begin{theorem}[Maillot--R\"ossler, \cite{MailRoss}]\label{thm:MR}
Let $n$ be an even integer with $n\geq 4$, and let $E = E_{\cX/\cA_{g,n,\bbC}}$ be the Hodge bundle of the universal abelian scheme over~$\cA_{g,n,\bbC}$. For all $i\geq 1$, the classes $N_{2i}(E) \in \CH^i(\cA_{g,n,\bbC})$ are torsion. If $\ell$ is a prime number with $\ell > 2i$, we have the estimate
\[
v_\ell\bigl(\text{order of $N_{2i}(E)$}\bigr) \leq v_\ell\bigl(\num((2^{2i}-1) \cdot B_{2i})\bigr)
\]
for the $\ell$-adic valuation of the order of the class~$N_{2i}(E)$.
\end{theorem}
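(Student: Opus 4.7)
The plan is to deduce the theorem from Corollary~\ref{maincor} by formally taking $\log\Td$, and then to track integrality via the refinement of that corollary mentioned in the introduction. Since $\cX/\cA_{g,n,\bbC}$ is principally polarized, one has $E \cong E_{\cX^t/\cA_{g,n,\bbC}}$, so part~\ref{maincora} of Corollary~\ref{maincor} gives $\Td(\cH)=1$ in $\CH(\cA_{g,n,\bbC};\bbQ)$. The sequence~\eqref{eq:sesdR} and multiplicativity of $\Td$ then yield $\Td(E)\cdot\Td(E^\vee)=1$. Next I take formal logarithms via the splitting principle. Using
\[
\log\frac{x}{1-e^{-x}} \;=\; \frac{x}{2} \,-\, \sum_{k\geq 1} \frac{B_{2k}}{2k\,(2k)!}\,x^{2k}
\]
applied to the Chern roots of $E$ and to their negatives for $E^\vee$, the degree-one contributions cancel and one obtains
\[
0 \;=\; \log\Td(E) + \log\Td(E^\vee) \;=\; -\sum_{k\geq 1} \frac{B_{2k}}{k\,(2k)!}\, N_{2k}(E) \quad\text{in }\CH(\cA_{g,n,\bbC};\bbQ).
\]
Since the summands lie in distinct graded pieces, each vanishes separately, so $B_{2i}\cdot N_{2i}(E) = 0$ in $\CH^{2i}(\cA_{g,n,\bbC};\bbQ)$ for every $i\geq 1$, which is the torsion statement.

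For the $\ell$-adic bound, the plan is to promote $\Td(\cH)=1$ to the integrally refined form in $\CH(\cA_{g,n,\bbC};\Lambda)$ recalled in the introduction. For $\ell > 2i$ the coefficient $B_{2i}/(i\,(2i)!)$ is $\ell$-adically controlled by $\num(B_{2i})$ alone: $\ell \nmid i\,(2i)!$, and by von Staudt--Clausen the denominator of~$B_{2i}$ is prime to~$\ell$. This already yields an estimate of the shape $v_\ell(\text{order of } N_{2i}(E)) \leq v_\ell(\num(B_{2i}))$ in the range where the integral refinement is available.

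The main obstacle is the additional factor $(2^{2i}-1)$ in the theorem and the extension to the full range $\ell > 2i$. This factor is the signature of
\[
\log\cosh x \;=\; \sum_{k\geq 1} \frac{(2^{2k}-1)\,B_{2k}}{k\,(2k)!}\,x^{2k},
\]
and points to an equivariant input using the $[-1]$-involution on $\cX$. The hypothesis that $n$ is even with $n\geq 4$ is precisely what rigidifies $\cX[2]$ as a constant group scheme over $\cA_{g,n,\bbC}$, so that the Atiyah--Bott--Lefschetz formula in families can be applied with explicit $2$-torsion sections as the fixed locus of $[-1]$. Combining that equivariant Grothendieck--Riemann--Roch computation with the Todd-class vanishing of Corollary~\ref{maincor} should substitute the $\cosh$-expansion for the $\Td$-expansion and thereby supply the $(2^{2i}-1)$-factor. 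This is, in spirit, the route realized over~$\bbC$ by Maillot and R\"ossler via Bismut--K\"ohler analytic torsion in Arakelov geometry; the hard part is organizing the integrality carefully enough so that precisely $\num((2^{2i}-1)B_{2i})$ bounds the $\ell$-adic order in the full range $\ell > 2i$.
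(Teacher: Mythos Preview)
The paper does not give a proof of Theorem~\ref{thm:MR}; it is quoted as a result of Maillot and R\"ossler with a citation to~\cite{MailRoss}, and the surrounding discussion (including the final subsection) only compares its strength with that of Proposition~\ref{prop:BUpperbd}. So there is no argument in the paper against which to check yours.

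On the substance of your proposal: the torsion assertion you do prove, and cleanly. Taking logarithms of $\Td(E)\cdot\Td(E^\vee)=1$ gives $\sum_{k\geq 1} \tfrac{B_{2k}}{k\,(2k)!}\,N_{2k}(E)=0$ in $\CH(\cA_{g,n,\bbC};\bbQ)$, and since each $B_{2k}\neq 0$ this forces $N_{2k}(E)=0$ with rational coefficients. This is essentially a repackaging of Corollary~\ref{maincor} together with the Newton--Chern relations recalled at the end of the paper.

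The $\ell$-adic bound, however, you have not proved, and you acknowledge this. Two genuine gaps remain. First, the integral refinement from~\cite{IAFD} only controls primes $\ell > 2g+d+1$ with $d=\dim(\cA_{g,n})=g(g+1)/2$, whereas the theorem claims a bound for all $\ell>2i$; the paper itself flags exactly this discrepancy in its closing paragraph. Second, the factor $(2^{2i}-1)$ cannot be extracted from the non-equivariant identity $\Td(\cH)=1$: it genuinely requires the $[-1]$-equivariant input you name, and in~\cite{MailRoss} this is supplied by the Bismut--K\"ohler analytic torsion computation in Arakelov geometry. Your write-up correctly locates where the difficulty lies, but ``should substitute'' and ``the hard part is organizing the integrality'' are not a proof; what remains is precisely the content of the cited paper.
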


Maillot and R\"ossler in fact prove something stronger, namely the analogue of the above assertion on a smooth toroidal compactification of~$\cA_{g,n,\bbC}$. It is not clear to us if the theorem is also valid over~$\bbQ$ or other base fields; this would be quite useful because by specialization it would then give the corresponding result over all~$\bbF_p$ with $p\nmid n$.

\subsection{}\label{subsec:IAFD}
To some extent the same method as in Section~\ref{sec:FD} can give additional information. This is based on an integral refinement of Fourier duality for abelian schemes that was worked out in~\cite{IAFD}. To state what we need, we  place ourselves in the situation where $S$ is a smooth quasi-projective variety of dimension~$d$ over some field~$L$ and $X \to S$ is an abelian scheme of dimension~$g$. If $\mathrm{char}(L) = p>0$, we assume that there exists a separable isogeny $X \to X^t$ and that $X$ admits a level~$n$ structure for some $n\geq 3$ prime to~$p$. (For instance, $X$ could be the universal abelian scheme over~$\cA_{g,n}$ in any characteristic not dividing~$n$.) Define
\[
\Lambda = \bbZ\Bigl[\frac{1}{(2g+d+1)!}\Bigr]\, .
\]
It is proven in~\cite{IAFD}, Section~5, that all classical results about Fourier duality and Beauville's decomposition remain valid for Chow rings with coefficients in~$\Lambda$. With the same arguments as in Section~\ref{sec:FD}, it follows from this that $\Td\bigl(\cH^1_\dR(X/S)\bigr) = 1$ in $\CH(S;\Lambda)$; see~\cite{IAFD}, Theorem~5.5(1).

\begin{proposition}\label{prop:BUpperbd}
Let notation and assumptions be as in~\ref{subsec:IAFD}, and write $\cH = \cH^1_\dR(X/S)$. For $i\in \{1,\ldots,g\}$, consider the Chern class $c_{2i}(\cH) \in \CH^{2i}(S)$. If $\ell$ is a prime number with $\ell > 2g+d+1$, we have the estimate
\[
v_\ell\bigl(\text{order of $c_{2i}(\cH)$}\bigr) \leq 
\max\{v_\ell(B_2),\ldots,v_\ell(B_{2\lfloor\frac{i}{2}\rfloor}),v_\ell(B_{2i})\}\, .
\]
\end{proposition}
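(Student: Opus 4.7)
The plan is to combine the two main ingredients recalled in~\ref{subsec:IAFD}: the identity $\Td(\cH) = 1$ in $\CH(S;\Lambda)$, and the isomorphism $E_{X/S} \cong E_{X^t/S}$ (from the separable isogeny $X \to X^t$), which via the sequence~\eqref{eq:sesdR} and the argument of Corollary~\ref{maincor}\ref{maincorb} gives $c_{2j+1}(\cH) = 0$ integrally. By Newton's identities, the odd power sums $N_{2j+1}(\cH)$ then also vanish.

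The first step is to take the formal logarithm of the Todd class. The power series identity
\[
\log \frac{x}{1-e^{-x}} \;=\; \frac{x}{2} \;-\; \sum_{k\geq 1} \frac{B_{2k}}{2k\cdot(2k)!}\, x^{2k}
\]
follows from the factorization $\tfrac{x}{1-e^{-x}} = e^{x/2}\cdot \tfrac{x/2}{\sinh(x/2)}$ and the standard expansion of $\log\sinh$. Summing over Chern roots of~$\cH$ and using $N_1(\cH) = c_1(\cH) = 0$ yields
\[
\log \Td(\cH) \;=\; -\sum_{k\geq 1} \frac{B_{2k}}{2k\cdot(2k)!}\, N_{2k}(\cH)
\]
in $\CH(S;\Lambda)\otimes\bbQ$. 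Since $\Td(\cH) = 1$ forces $\log\Td(\cH) = 0$, and since each $N_{2k}(\cH)$ lies in its own graded piece $\CH^{2k}(S;\Lambda)$, comparing degrees produces, for every $k\geq 1$, the separate relation
\[
\frac{B_{2k}}{2k\cdot(2k)!}\cdot N_{2k}(\cH) \;=\; 0 \quad\text{in }\CH^{2k}(S;\Lambda)\,.
\]

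Next I localize at a prime $\ell > 2g+d+1$. Then $2k\cdot(2k)!$ is a unit in~$\bbZ_{(\ell)}$ (as $2k\leq 2g<\ell$), and by von Staudt--Clausen every prime factor of the denominator of~$B_{2k}$ satisfies $p\leq 2k+1<\ell$; hence $\frac{B_{2k}}{2k(2k)!} = u\cdot\ell^{v_\ell(B_{2k})}$ with $u\in\bbZ_{(\ell)}^\times$, and $\ell^{v_\ell(B_{2k})}\cdot N_{2k}(\cH) = 0$ in $\CH(S;\bbZ_{(\ell)})$. By Newton's identities, $c_{2i}(\cH)$ is a $\bbZ_{(\ell)}$-linear combination of monomials $\prod_{j=1}^{s} N_{2\lambda_j}(\cH)$ indexed by partitions $\lambda = (\lambda_1\geq\cdots\geq\lambda_s)$ of~$i$, the rational coefficients having denominators dividing~$(2i)!$ and hence lying in $\bbZ_{(\ell)}^\times$. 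Since killing one factor suffices to annihilate a monomial, each such monomial has order dividing $\ell^{\min_j v_\ell(B_{2\lambda_j})}$, so
\[
v_\ell\bigl(\text{order of } c_{2i}(\cH)\bigr) \;\leq\; \max_{\lambda\vdash i}\, \min_j v_\ell(B_{2\lambda_j})\,.
\]
The one-part partition contributes $v_\ell(B_{2i})$; for any partition with at least two parts, the smallest part $\lambda_s$ satisfies $2\lambda_s\leq \lambda_s+\lambda_1\leq i$, hence $\lambda_s\leq\lfloor i/2\rfloor$, and the minimum over its parts is bounded by $v_\ell(B_{2k})$ for some $k\in\{1,\ldots,\lfloor i/2\rfloor\}$. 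This is precisely the bound in the proposition. The main technical care lies in the bookkeeping of denominators: one must verify that every rational coefficient appearing (in the $\log$-expansion, in von Staudt--Clausen, and in Newton's identities) is a unit in $\bbZ_{(\ell)}$ for $\ell > 2g+d+1$, so that the whole argument is legal in $\CH(S;\bbZ_{(\ell)})$, where the torsion agrees with the $\ell$-primary torsion of~$\CH(S)$.
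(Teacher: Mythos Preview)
Your argument is correct, and it takes a route that is related to but genuinely different from the paper's. The paper argues by induction directly in terms of the Chern classes: writing $\Td_{2i} = f(c_1,\ldots,c_{2i-1}) + qB_{2i}\,c_{2i}$ with $f \in \Lambda[c_1,\ldots,c_{2i-1}]$ and $q \in \Lambda^*$, it uses the vanishing $\Td_{2i}(\cH)=0$ and the observation that every monomial in~$f$ contains some $c_j$ with $j\leq i$, together with the inductive hypothesis on the lower $c_{2k}$. You instead pass to the additive side by taking $\log$ of the Todd class, which diagonalizes the problem: the identity $\log\Td(\cH)=0$ splits degreewise into the single relations $\tfrac{B_{2k}}{2k\,(2k)!}\,N_{2k}(\cH)=0$, so each Newton class is killed by $\ell^{v_\ell(B_{2k})}$ in $\CH(S;\bbZ_{(\ell)})$, and the bound then drops out of the combinatorics of expressing $c_{2i}$ as a $\bbZ_{(\ell)}$-combination of monomials $\prod_j N_{2\lambda_j}$ over partitions $\lambda\vdash i$.

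What your approach buys is that the stated bound appears transparently as $\max_{\lambda\vdash i}\min_j v_\ell(B_{2\lambda_j})$, with no induction needed; in particular, the ``$\max$'' (rather than a sum) of the $v_\ell(B_{2k})$ is immediate from the fact that killing a single factor $N_{2\lambda_j}$ already kills the whole monomial. The paper's inductive formulation is shorter to state but leaves more to the reader in tracking exactly how the $\max$ arises, since the na\"ive induction step $qB_{2i}\,c_{2i}=-f$ a priori only yields a bound of the form $v_\ell(B_{2i}) + (\text{bound for }f)$; your diagonalization via $N_{2k}$ avoids this bookkeeping entirely. Both approaches rest on the same two inputs from \ref{subsec:IAFD} --- $\Td(\cH)=1$ over~$\Lambda$ and the vanishing of the odd Chern classes --- and on the fact that for $\ell>2g+d+1$ all auxiliary denominators (from $(2k)!$, from von Staudt--Clausen, and from Newton's identities) are $\ell$-adic units, which you verify carefully.
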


\begin{proof}
The degree~$2i$ component of the Todd polynomial is of the form
\[
\Td_{2i} = f(c_1,\ldots,c_{2i-1}) + (q\cdot B_{2i})\cdot c_{2i}\, ,
\]
where $f$ is a homogeneous polynomial of degree~$2i$ in $\Lambda[c_1,\ldots,c_{2i-1}]$ (with $\mathrm{deg}(c_j) = j$) and $q\in \Lambda^*$. Using induction on~$i$ (with $i\in \{1,\ldots,g\}$), the assertion now follows from the fact that $\Td(\cH) = 1$ in $\CH(S;\Lambda)$ together with the vanishing of the odd Chern classes. (Note that each monomial in~$f$ involves at least one class~$c_j$ with $j \leq i$.)
\end{proof}

If $\mathrm{char}(L) = p>0$, we can combine this with Proposition~\ref{prop:FZip}, and we find that the $\ell$-adic valuation of the order of $c_{2i}(\cH)$ is at most the minimum of $v_\ell(p^{2i}-1)$ and $\max\{v_\ell(B_2),\ldots,v_\ell(B_{2\lfloor\frac{i}{2}\rfloor}),v_\ell(B_{2i})\}$.

If we try to apply these results, we are confronted with the fact that the even Bernoulli numbers can have large numerators; see~\cite{OEIS}, sequence A027641. It seems a general pattern that these numerators have large prime factors.

\begin{example}
Let $X$ be the universal abelian scheme over $S = \cA_{13,4}$, the moduli scheme of $13$-dimensional ppav with a level~$4$ structure. In this case $g=13$ and $d = 91$, so that $\Lambda = \bbZ\bigl[1/105!\bigr]$. Using the relation $\Td(\cH) = 1$ in $\CH(S;\Lambda)$, we find that $c_2(\cH),\ldots,c_{10}(\cH)$ are $\ell$-adic units for all prime numbers $\ell > 105$. As $\num(B_{12}) = 691$, we find that $c_{12}(\cH)$ satisfies $691\cdot c_{12}(\cH) = 0$ in~$\CH(S;\Lambda)$, and $691$ is prime. Continuing in this manner we find in $\CH(S;\Lambda)$ the relations
\begin{itemize}
\item $3617 \cdot c_{14}(\cH) = 0$, and $3617$ is prime;
\item $43867 \cdot c_{16}(\cH) = 0$, and $43867$ is prime;
\item $174611\cdot c_{18}(\cH) = 0$, and $174611 = 283 \cdot 617$;
\item $854513 \cdot c_{20}(\cH) = 0$, and $854513 = 11 \cdot 131 \cdot 593$;
\item $236364091 \cdot c_{22}(\cH) = 0$, and $236364091 = 103 \cdot 2294797$.
\end{itemize}
For the next even Bernoulli number we have $\num(B_{24}) = 8553103$, and $8553103 = 13 \cdot 657931$. However, it may happen that in the Todd polynomial~$\Td_{24}$ the coefficient of~$c_{12}^2$ is nonzero. (We do not know if this is indeed the case, as the Todd polynomials are rather hard to compute.) If this happens, the order of~$c_{24}(\cH)$ could be divisible by~$691$. 

For the next one, we have $\num(B_{26}) = 23749461029 = 7 \cdot 9349 \cdot 362903$; so the order of $c_{26}(\cH)$ could, a priori, be divisible by one or more of the prime factors $691$, $3617$, $9349$ or~$362903$. However, this does not happen because $c_{26}$ is the top Chern class, so the result of Ekedahl and van der Geer (see Remark~\ref{rems:EvdG}\ref{rem:c2gH}) tells us that the order of $c_{26}(\cH)$ in~$\CH(S)$ divides $12!\cdot n_{12} = 2^5 \cdot 3^2\cdot 5 \cdot 7 \cdot 13!$.

If, still with the same $X/S$, the base field has characteristic $p>0$, we can often do better. For instance, we then know that $(p^{24}-1) \cdot c_{12}(\cH) = 0$, but from the above we know that $691 \cdot c_{12}(\cH) = 0$ in~$\CH(S;\Lambda)$. As $\mathrm{gcd}(24,690) = 6$, it follows that the order of~$c_{12}(\cH)$ only has prime factors smaller than~$105$, unless~$p$ satisfies $p^6\equiv 1 \bmod{691}$, in which case also $691$ may divide the order of~$c_{12}(\cH)$.
\end{example}

\subsection{}
Finally, we may try to compare Proposition~\ref{prop:BUpperbd} with what we get from Theorem~\ref{thm:MR}. Consider a situation as in~\ref{subsec:IAFD}, with base field $L=\bbC$. The even Chern classes of $\cH = \cH^1_\dR(X/S)$ can be expressed as polynomials with coefficients in~$\Lambda$ in the even Newton classes $N_{2j}(E)$ of $E = E_{X/S}$, using the identities $N_k = (-1)^{k-1}\cdot \lambda_k +  \sum_{i=1}^{k-1}\; (-1)^{k-1+i} \cdot \lambda_{k-i}\cdot N_i$. (Recall that $\lambda_j = c_j(E)$.) E.g.,
\begin{align*}
c_2(\cH) &= 2 \lambda_2 - \lambda_1^2 = - N_2(E)\\
c_4(\cH) &= 2 \lambda_4 - 2 \lambda_1 \lambda_3 + \lambda_2^2 = \tfrac{1}{2} \cdot \bigl(N_2(E)^2 - N_4(E)\bigr)\\
c_6(\cH) &= 2 \lambda_6 - 2 \lambda_1 \lambda_5 + 2 \lambda_2 \lambda_4 - \lambda_3^2 = -\tfrac{1}{6} \cdot \bigl(N_2(E)^3 - 3 N_2(E) N_4(E) + 2 N_6(E)\bigr)\\
c_8(\cH) &= 2 \lambda_8 - 2 \lambda_1 \lambda_7 + 2 \lambda_2 \lambda_6 - 2 \lambda_3 \lambda_5 + \lambda_4^2\\
&= \tfrac{1}{24} \bigl(N_2(E)^4 - 6 N_2(E)^2 N_4(E) + 8 N_2(E) N_6(E) +3 N_4(E)^2 - 6 N_8(E)\bigr)\, .
\end{align*}
Using such relations, it seems that, over~$\bbC$, the bounds that we get from Proposition~\ref{prop:BUpperbd} are essentially the same as those obtained from Theorem~\ref{thm:MR}, though the latter also gives information about primes~$\ell$ with $2g< \ell \leq 2g+d+1$. (On the other hand, Proposition~\ref{prop:BUpperbd} has the advantage that it applies to abelian schemes over any base scheme that is smooth and quasi-projective over a field.)

%%
%% make bibliography small:
%%
{\small

} %% end of small for the blibliography
\bigskip

\noindent
\texttt{b.moonen@science.ru.nl}

\noindent
Radboud University Nijmegen, IMAPP, Nijmegen, The Netherlands

\end{document}